\documentclass[11pt,a4paper]{article}

\usepackage[T1]{fontenc}
\usepackage[utf8]{inputenc}
\usepackage[english]{babel}
\usepackage{lmodern}
\usepackage{amsmath,amssymb,amsthm}
\usepackage{geometry}
\usepackage{microtype}
\usepackage{cite}
\usepackage[hidelinks]{hyperref}
\hypersetup{
  pdftitle={Frobenius orbit codes and finite-field Fourier spectra: exact distance and coset distributions},
  pdfauthor={David Kumallagov, Daniil Sizikov, Anton Zarubin},
  pdfkeywords={finite-field Fourier transform, subfield-linear code,
    Frobenius orbit, covering radius, coset leader, list decoding}
}

\newtheorem{theorem}{Theorem}[section]
\newtheorem{corollary}[theorem]{Corollary}
\theoremstyle{definition}
\newtheorem{definition}[theorem]{Definition}
\newtheorem{example}[theorem]{Example}
\theoremstyle{remark}
\newtheorem{remark}[theorem]{Remark}

\newcommand{\K}{\mathbb F_q}
\newcommand{\Lfield}{\mathbb F_Q}
\newcommand{\F}[1]{\mathbb F_{#1}}
\newcommand{\Cpi}{\mathcal C(\pi)}
\newcommand{\CA}{\mathcal S_A}
\newcommand{\Fix}{\operatorname{Fix}}
\newcommand{\wtH}{\operatorname{wt}_{\mathrm H}}
\newcommand{\dist}{\operatorname{d}_{\mathrm H}}

\title{Frobenius orbit codes and finite-field Fourier spectra:\\
exact distance and coset distributions}

\author{
David Kumallagov
\and
Daniil Sizikov
\and
Anton Zarubin}

\date{}

\begin{document}
\maketitle

\begin{abstract}
Let $q$ be a prime power, let $m\ge1$, put $Q=q^m$, and let a permutation
$\pi$ of a finite index set $I$ satisfy $\pi^m=1$.  We study the
$\F q$-linear code
\[
 \mathcal C(\pi)=\{x\in\Lfield^I:x_{\pi(i)}=x_i^q\},
\]
which includes the Fourier spectra of $\K$-valued functions on finite
abelian groups split by $\Lfield$.  Our structural starting point is a
$\F q$-linear Hamming isometry taking each cycle of $\pi$ of length $\ell$
to the subfield repetition code
$\{(b,\ldots,b):b\in\F{q^\ell}\}$.  From this normal form we obtain exact
symbol weight and received-word distance enumerators, all list sizes, and the
covering radius.  An occupancy generating function gives the complete
coset-leader distribution, the exact mean distance, the probability of a
unique nearest word, and the number of deep holes.  For the full cyclic
family of length $q^m-1$, the deficit of a uniformly random ambient word from
the covering radius is asymptotically Poisson with mean $(m-1)/2$, with an
additional $1$ when $m$ is even; a central limit theorem follows.  We also
determine the deep-hole probability and the second-order gap to the
sphere-covering bound.  Classical Fourier--Galois descent and cyclotomic
orbit enumeration enter only to identify the Fourier specialization.
\end{abstract}

\medskip
\noindent\textbf{Keywords.}
Finite-field Fourier transform; subfield-linear code; Frobenius orbit;
covering radius; coset leader; list decoding.

\smallskip
\noindent\textbf{Mathematics Subject Classification (2020).}
94B05; 94B15; 94B35; 11T71.

\section{Introduction}

Let $q$ be a prime power, let $m\ge1$, and put $Q=q^m$.  If a finite-field discrete Fourier
transform is evaluated in $\Lfield$ but its input lies in $\K$, then its
spectral coordinates satisfy the classical conjugacy constraint
\begin{equation}
  V(\chi^q)=V(\chi)^q.                         \label{eq:intro-conjugacy}
\end{equation}
For a cyclic transform this reads $V_{qs}=V_s^q$.  This condition, its
description by $q$-cyclotomic classes, and the storage of one native-subfield
representative per Frobenius orbit are standard; see Blahut
\cite[Theorem~3]{Blahut1979}, the Frobenius FFT of van der Hoeven and
Larrieu~\cite{HoevenLarrieu}, and its additive analogue~\cite{LiEtAl2018}.
Transform-domain descriptions of abelian and subfield-linear codes are also
well established~\cite{Tanner1988,RajanSiddiqi1992,DeyRajan2005,Huffman2013}.

The Fourier conjugacy condition and its parametrization by cyclotomic orbits
are classical.  We study instead the exact nearest-word and coset geometry of
the resulting subfield-linear spectral space inside the full extension-field
ambient space.  The natural level of generality is the following semilinear
permutation code.  If $I$ is finite and
$\pi^m=1$, put
\[
  \Cpi=\{x\in\Lfield^I:x_{\pi(i)}=x_i^q\text{ for every }i\in I\}.
\]
The Fourier case is obtained by taking $I=\widehat A$ and
$\pi(\chi)=\chi^q$ for a finite abelian group $A$ split by $\Lfield$.

The central structural observation is that a coordinatewise inverse
Frobenius map sends the constituent on a cycle of length $\ell$ to
\[
 \Delta_\ell(\F{q^\ell})
 =\{(b,\ldots,b):b\in\F{q^\ell}\}\subseteq\Lfield^\ell.
\]
Thus $\Cpi$ is a direct product of subfield repetition codes up to a
$\K$-linear symbol-Hamming isometry.  The resulting restricted occupancy
model is explicit enough to determine the complete received-word distance
enumerator, all list sizes, the covering radius, every coset-leader weight,
the deep holes, and the probability of unique nearest-word decoding.

There is also an important limitation.  In the Fourier case the trivial
character is a one-element Frobenius orbit.  Consequently, the global symbol
minimum distance is $1$.  Accordingly, the full space is naturally viewed as
a nearest-word approximation space rather than as a worst-case
error-correcting code.

The Frobenius-coordinate isometry is an elementary reduction to
restricted-alphabet occupancy while retaining the native subfields on the
individual cycles.  The results established here have two parts.  First, we use this model
to derive exact distance, list, coset-leader, uniqueness, and deep-hole
formulas.  Second, for the full cyclic family we prove a Poisson approximation
and a central limit theorem for the random distance deficit, obtain a sharp
deep-hole probability, and determine the second-order gap between the exact
covering radius and the rate-only sphere-covering bound.  The closed occupancy
formulas are specific to this setting; to the best of our knowledge, the
probabilistic and second-order asymptotics are new.  Fourier--Galois
descent, M\"obius orbit enumeration, and the sphere-covering inequality
are used as standard tools.  All results are unconditional under their stated
hypotheses; no conjecture is used.  The Fourier specialization assumes the
splitting condition that the group exponent divides $q^m-1$, while all
asymptotic statements take $q$ fixed, $m\to\infty$, and $n=q^m-1$.

A general framework for coset-leader and list weight enumerators was developed
by Jurrius and Pellikaan~\cite{JurriusPellikaan2015}.  The formulas below are
more specialized: the admissible symbols form the subfield
$\F{q^\ell}\subseteq\F{q^m}$ on a cycle of length $\ell$, and the complete
enumerator factors over Frobenius cycles.  This restricted-alphabet structure
is what makes the closed occupancy formulas and the ensuing asymptotics
possible.

Subfield-linear codes with the ordinary symbol Hamming metric are equivalently
linear codes in the folded Hamming metric; we use the terminology and viewpoint
of~\cite{MartinezPenasRodriguez2025}.  Standard references
for finite fields, finite Fourier analysis, coding theory, and covering codes
are~\cite{LidlNiederreiter,Terras,HuffmanPless,CohenEtAl}.

Section~\ref{sec:semilinear} gives the diagonal normal form and basic
parameters.  Section~\ref{sec:fourier} records the Fourier specialization and
the required orbit counts.  Section~\ref{sec:geometry} develops the exact
distance and coset geometry.  Section~\ref{sec:cyclic} treats the full cyclic
family, including its covering and probabilistic asymptotics.

\section{Semilinear permutation codes}
\label{sec:semilinear}

Throughout, $q=p^a$ is a prime power, $m\ge1$, and
$Q=q^m$.  Let $I$ be a nonempty finite set of
cardinality $N$, and let $\pi\in\operatorname{Sym}(I)$ satisfy $\pi^m=1$.
Every cycle length of $\pi$ therefore divides $m$.  We equip $\Lfield^I$ with
the symbol Hamming weight and distance
\[
 \wtH(x)=|\{i:x_i\ne0\}|,
 \qquad
 \dist(x,y)=\wtH(x-y).
\]

Let $\mathfrak O(\pi)$ be the set of cycles of $\pi$.  For each
$O\in\mathfrak O(\pi)$ choose a leader $i_O$ and write
\[
 O=(i_{O,0},\ldots,i_{O,\ell_O-1}),
 \qquad i_{O,t}=\pi^t(i_O),
 \qquad \ell_O=|O|.
\]
Put $S_O=\F{q^{\ell_O}}$, viewed as the unique subfield of $\Lfield$ with
$q^{\ell_O}$ elements.

\begin{definition}
The \emph{Frobenius permutation code} associated with $\pi$ is
\begin{equation}
 \Cpi=\{x\in\Lfield^I:x_{\pi(i)}=x_i^q\text{ for all }i\in I\}.
 \label{eq:frob-code}
\end{equation}
It is a $\K$-linear code of length $N$ over the alphabet $\Lfield$.
\end{definition}

\subsection{Diagonal normal form}

\begin{theorem}
\label{thm:diagonal-normal-form}
Define $\Psi:\Lfield^I\to\prod_{O\in\mathfrak O(\pi)}\Lfield^{\ell_O}$ by
\begin{equation}
  \Psi(x)_{O,t}=x_{i_{O,t}}^{q^{m-t}},
  \qquad 0\le t<\ell_O.                       \label{eq:normalization}
\end{equation}
Then $\Psi$ is a $\K$-linear Hamming isometry and
\begin{equation}
 \Psi(\Cpi)=
 \prod_{O\in\mathfrak O(\pi)}
 \Delta_{\ell_O}(S_O),
 \qquad
 \Delta_\ell(S)=\{(b,\ldots,b):b\in S\}.     \label{eq:product-repetition}
\end{equation}
In particular, the metric isomorphism type of $\Cpi$ depends only on the
multiset of cycle lengths of $\pi$.
\end{theorem}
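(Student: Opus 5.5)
The plan is to verify the three assertions of Theorem~\ref{thm:diagonal-normal-form} directly from the definition \eqref{eq:normalization}, coordinate by coordinate.

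\emph{$\Psi$ is a $\K$-linear Hamming isometry.} First observe that the sets $\{i_{O,t}:0\le t<\ell_O\}$, as $O$ ranges over $\mathfrak O(\pi)$, partition $I$, so $\Psi$ acts coordinatewise. Each coordinate map $y\mapsto y^{q^{m-t}}$ is a power of the Frobenius automorphism of $\Lfield$ over $\K$. Hence it is additive, fixes $\K$ pointwise (so it is $\K$-linear), and is bijective with $0\mapsto0$. Consequently $\Psi$ is a $\K$-linear bijection that preserves the support of every vector, so $\wtH(\Psi(x))=\wtH(x)$. By linearity, $\dist(\Psi(x),\Psi(y))=\dist(x,y)$.

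\emph{Computing $\Psi(\Cpi)$.} Fix a cycle $O$ with $\ell=\ell_O$ and let $x\in\Cpi$. Iterating the defining relation gives $x_{i_{O,t}}=x_{i_O}^{q^t}$. Since $\pi^\ell(i_O)=i_O$, we also get $x_{i_O}=x_{i_O}^{q^\ell}$, so $b:=x_{i_O}$ lies in $S_O$. Then
\[
\Psi(x)_{O,t}=b^{q^{t}q^{m-t}}=b^{q^m}=b,
\]
and hence $\Psi(x)|_O\in\Delta_\ell(S_O)$.

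Conversely, given a family $(b_O)$ with each $b_O\in S_O$, define $x_{i_{O,t}}=b_O^{q^t}$. This is well defined because $b_O^{q^\ell}=b_O$ and every index has a unique representation $i_{O,t}$ with $0\le t<\ell$. The relation $x_{\pi(i)}=x_i^q$ holds inside the cycle by construction, and also at the wrap-around step $t=\ell-1$, where $x_{i_{O,0}}=b_O=b_O^{q^\ell}=(x_{i_{O,\ell-1}})^q$. So $x\in\Cpi$, and by the forward computation $\Psi(x)=(b_O)$ blockwise. This establishes \eqref{eq:product-repetition}.

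\emph{Dependence only on cycle lengths.} Each factor $\Delta_{\ell}(S)$ with $S=\F{q^\ell}$ depends only on $\ell$. Reordering the blocks is a coordinate permutation, which is itself a Hamming isometry. Composing this permutation with $\Psi$ shows that two permutations with the same multiset of cycle lengths give codes that are metrically isomorphic via $\K$-linear isometries.

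The only delicate point is the wrap-around consistency in the converse direction. It is exactly the condition $b\in S_O$, which in turn needs $\ell_O\mid m$ so that $S_O$ is a subfield of $\Lfield$; this follows from $\pi^m=1$. Everything else is routine bookkeeping.
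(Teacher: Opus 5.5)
Your proposal is correct and follows the paper's proof. Each coordinate map $z\mapsto z^{q^{m-t}}$ is a $\K$-linear support-preserving permutation of the field, the forward direction iterates $x_{\pi(i)}=x_i^q$ to get a seed $b\in S_O$ with $\Psi(x)_{O,t}=b^{q^m}=b$, and the converse assigns $x_{i_{O,t}}=b_O^{q^t}$; you additionally spell out the wrap-around check and the block-reordering argument for the final clause, which the paper leaves implicit.
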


\begin{proof}
Every map $z\mapsto z^{q^{m-t}}$ is a $\K$-linear permutation of
$\Lfield$, hence $\Psi$ is a $\K$-linear coordinatewise Hamming isometry.
If $x\in\Cpi$ and $b=x_{i_{O,0}}$, then
\[
 x_{i_{O,t}}=b^{q^t}
 \quad\text{and}\quad
 b^{q^{\ell_O}}=b.
\]
Thus $b\in S_O$, and
$\Psi(x)_{O,t}=b^{q^m}=b$ for every $t$.
Conversely, for arbitrary $b_O\in S_O$, the assignments
\[
 x_{i_{O,t}}=b_O^{q^t}
\]
are well defined around every cycle and satisfy
\eqref{eq:frob-code}.  This constructs the inverse correspondence.
\end{proof}

\begin{corollary}
\label{cor:basic-parameters}
The code $\Cpi$ has $\K$-dimension $N$ and cardinality $q^N$.  Its symbol
weight enumerator and minimum distance are
\begin{align}
 W_{\Cpi}(z)
   &=\sum_{x\in\Cpi}z^{\wtH(x)}
     =\prod_{O\in\mathfrak O(\pi)}
       \bigl(1+(q^{\ell_O}-1)z^{\ell_O}\bigr),
       \label{eq:symbol-weight-enumerator}\\
 d_{\mathrm H}(\Cpi)&=\min_{O\in\mathfrak O(\pi)}\ell_O. \label{eq:symbol-min-distance}
\end{align}
If $m>1$, then $\Cpi$ is not $\Lfield$-linear.
\end{corollary}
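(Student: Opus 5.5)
Everything will be read off from Theorem~\ref{thm:diagonal-normal-form}. Since $\Psi$ is a $\K$-linear bijective Hamming isometry, it preserves cardinality, $\K$-dimension, the weight enumerator and the minimum distance. It therefore suffices to compute these quantities for the product $\prod_O\Delta_{\ell_O}(S_O)$.

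\textbf{Cardinality and dimension.} Each factor $\Delta_{\ell_O}(S_O)$ is in $\K$-linear bijection with $S_O=\F{q^{\ell_O}}$, so it has $q^{\ell_O}$ elements and $\K$-dimension $\ell_O$. Summing over the cycles, which partition $I$, gives $\sum_O\ell_O=N$. Hence $\dim_{\K}\Cpi=N$ and $|\Cpi|=q^N$.

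\textbf{Weight enumerator.} The symbol Hamming weight is additive over the blocks of coordinates indexed by the cycles. The weight enumerator of a direct product is therefore the product of the factor enumerators. In $\Delta_\ell(S)$ the word $(b,\ldots,b)$ has weight $0$ if $b=0$ and weight $\ell$ otherwise. So that factor contributes $1+(q^{\ell}-1)z^{\ell}$, which yields \eqref{eq:symbol-weight-enumerator}.

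\textbf{Minimum distance.} $\Cpi$ is additively closed, so its minimum distance equals its minimum nonzero weight. This is the smallest exponent with nonzero coefficient in the product, apart from the constant term. Every expanded term has exponent equal to a nonempty sum of the $\ell_O$, and each single $\ell_O$ occurs, because $q^{\ell_O}-1\ge1$. The minimum is therefore $\min_O\ell_O$, giving \eqref{eq:symbol-min-distance}. Concretely, a minimum-weight word is supported on one shortest cycle.

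\textbf{Failure of $\Lfield$-linearity.} This step needs a little care, and I would argue it directly on $\Cpi$ rather than through $\Psi$, since $\Psi$ is only $\K$-linear. Take any nonzero $x\in\Cpi$ and a scalar $\lambda\in\Lfield\setminus\K$; such $\lambda$ exists since $m>1$. If $\lambda x$ were in $\Cpi$, then for an index $i$ with $x_i\ne0$ we would have $\lambda x_{\pi(i)}=(\lambda x_i)^q$. Combined with $x_{\pi(i)}=x_i^q$, this gives $\lambda x_i^q=\lambda^q x_i^q$, so $\lambda^q=\lambda$, i.e.\ $\lambda\in\K$, a contradiction. The one point to check is that a nonzero codeword exists; this holds because $|\Cpi|=q^N\ge q>1$, as $I$ is nonempty.

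I expect no real obstacle here. The only subtle point is this last one: not transporting $\Lfield$-scalar multiplication through the merely $\K$-linear map $\Psi$.
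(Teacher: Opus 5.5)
Your proposal is correct and follows essentially the paper's route: the cardinality, dimension, weight enumerator and minimum distance are read off cycle by cycle from the diagonal normal form, and the failure of $\Lfield$-linearity is checked directly on $\Cpi$. The only difference is cosmetic: where you take an arbitrary nonzero codeword, the paper takes the all-one word $\mathbf 1\in\Cpi$ and notes that $a\mathbf 1\in\Cpi$ iff $a^q=a$, which also makes the existence of a nonzero codeword immediate.
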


\begin{proof}
On a cycle of length $\ell$, the zero seed gives weight $0$, while each of
the $q^\ell-1$ nonzero seeds gives weight $\ell$.  Independence of the cycles
gives \eqref{eq:symbol-weight-enumerator}, the dimension statement, and
\eqref{eq:symbol-min-distance}.  Finally, the all-one word belongs to
$\Cpi$.  For $a\in\Lfield$, the constant word $a\mathbf1$ belongs to $\Cpi$
if and only if $a^q=a$, so multiplication by any $a\in\Lfield\setminus\K$
does not preserve the code.
\end{proof}

\section{Finite-field Fourier descent}
\label{sec:fourier}

Let $A$ be a finite abelian group of order $N$ and exponent $E$, and assume
\begin{equation}
 E\mid Q-1.                                      \label{eq:splitting-assumption}
\end{equation}
Then $p\nmid N$, $1/N\in\K$, and $\Lfield$ is a splitting field for $A$.
Write
\[
 \widehat A=\operatorname{Hom}(A,\Lfield^*)
\]
and use the Fourier conventions
\begin{equation}
 \widehat f(\chi)=\sum_{x\in A}f(x)\chi(x),
 \qquad
 f(x)=\frac1N\sum_{\chi\in\widehat A}
             \widehat f(\chi)\chi(x)^{-1}.       \label{eq:group-fourier}
\end{equation}
The Frobenius permutation of $\widehat A$ is
$\pi_q(\chi)=\chi^q$, where $\chi^q(x)=\chi(x)^q$.
Put
\[
 \CA=\{V\in\Lfield^{\widehat A}:V(\chi^q)=V(\chi)^q\}.
\]
The following descent statement is classical; we include it to fix the
Fourier conventions used below.

\begin{theorem}
\label{thm:fourier-descent}
An $\Lfield$-valued function $V$ on $\widehat A$ is the Fourier transform of
a unique $\K$-valued function on $A$ if and only if
\begin{equation}
 V(\chi^q)=V(\chi)^q
 \quad\text{for every }\chi\in\widehat A.       \label{eq:descent}
\end{equation}
\end{theorem}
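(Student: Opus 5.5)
The plan is to treat the Fourier transform $f\mapsto\widehat f$ of \eqref{eq:group-fourier} as an $\Lfield$-linear bijection $\Lfield^A\to\Lfield^{\widehat A}$. Then I check two things. First, it maps $\K^A$ into $\CA$. Second, $\CA$ has no more elements than $\K^A$. A counting argument then finishes the proof.

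\textbf{Step 1 (bijectivity).} Under \eqref{eq:splitting-assumption}, $\Lfield^*$ contains a cyclic subgroup of order $E$. Hence $|\widehat A|=N$, and the orthogonality relations $\sum_{x\in A}\chi(x)=N\delta_{\chi,1}$ and $\sum_{\chi}\chi(x)=N\delta_{x,0}$ hold. Since $p\nmid N$, $N$ is invertible in $\K$. So the inversion formula in \eqref{eq:group-fourier} holds, and $f\mapsto\widehat f$ is a bijection $\Lfield^A\to\Lfield^{\widehat A}$. This gives the uniqueness clause at once. I regard this step as standard and do not expect difficulty. The only point needing care is that $\widehat A\cong A$ requires $\Lfield^*$ to contain enough roots of unity, which is exactly what the splitting assumption provides.

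\textbf{Step 2 (necessity).} Let $f$ be $\K$-valued. Frobenius $z\mapsto z^q$ is additive and fixes $f(x)$. Therefore
\[
\widehat f(\chi)^q=\sum_x f(x)^q\chi(x)^q=\sum_x f(x)\chi^q(x)=\widehat f(\chi^q),
\]
which is \eqref{eq:descent}.

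\textbf{Step 3 (sufficiency).} Here I will count rather than compute. By Step 2 and Step 1, the Fourier transform restricts to an injective map $\K^A\to\CA$, and $|\K^A|=q^N$. The set $\CA$ is exactly $\mathcal C(\pi_q)$ with $I=\widehat A$, and $\pi_q^m=1$ holds because $\chi^{q^m}=\chi^{Q}=\chi$ (since $E\mid Q-1$). So Corollary~\ref{cor:basic-parameters} gives $|\CA|=q^N$. Hence the injection is a bijection, and every $V$ satisfying \eqref{eq:descent} is $\widehat f$ for some $\K$-valued $f$.

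As an alternative, one could apply Frobenius to the inversion formula directly and use the fact that $\chi\mapsto\chi^q$ permutes $\widehat A$ to show $f(x)^q=f(x)$. The counting route avoids that reindexing entirely.

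The main obstacle, mild as it is, is Step 3. It requires verifying that $\pi_q$ is a permutation satisfying $\pi_q^m=1$, so that Theorem~\ref{thm:diagonal-normal-form} and its corollary apply. This follows because $q$ is coprime to $E$, so $\chi\mapsto\chi^q$ is invertible on $\widehat A$.
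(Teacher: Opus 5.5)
Your proof is correct, but the sufficiency direction differs from the paper's.

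\textbf{The paper's route.} Given $V$ satisfying \eqref{eq:descent}, the paper defines $f$ by the inversion formula and raises it to the $q$-th power. It uses $1/N\in\K$, $V(\chi)^q=V(\chi^q)$ and $\chi(x)^{-q}=(\chi^q(x))^{-1}$, then reindexes the sum through the permutation $\chi\mapsto\chi^q$ to obtain $f(x)^q=f(x)$.

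\textbf{Your route.} You show that $f\mapsto\widehat f$ injects $\K^A$ into $\CA$ and then import $|\CA|=q^N$ from Corollary~\ref{cor:basic-parameters}. This is legitimate. Section~\ref{sec:semilinear} is purely combinatorial and never uses the descent theorem, so there is no circularity. Your checks that $|\widehat A|=N$ and $\pi_q^m=1$ are exactly what is needed to apply that corollary with $I=\widehat A$.

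\textbf{Comparison.} The paper's argument is self-contained: it needs only Fourier inversion and the fact that Frobenius permutes $\widehat A$. It also exhibits the preimage explicitly as the inverse transform. As a result, the count $|\CA|=q^N$ in \eqref{eq:fourier-code-size} can be read either from Section~\ref{sec:semilinear} or as a consequence of descent. Your argument replaces that one-line computation with a dependence on Theorem~\ref{thm:diagonal-normal-form}. The reindexing you say you avoid is not really eliminated; it is hidden in the seed parametrization $x_{i_{O,t}}=b_O^{q^t}$ of each Frobenius cycle there. What your version does make clear is that, once injectivity is known, descent is the same statement as the cardinality match between $\K^A$ and the space of one seed in $\F{q^{\ell_O}}$ per orbit.
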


\begin{proof}
If $f$ is $\K$-valued, then
\[
 \widehat f(\chi)^q
 =\sum_{x\in A}f(x)^q\chi(x)^q
 =\widehat f(\chi^q).
\]
Conversely, apply Fourier inversion and use \eqref{eq:descent}:
\[
 f(x)^q
 =\frac1N\sum_{\chi\in\widehat A}
 V(\chi^q)(\chi^q(x))^{-1}=f(x),
\]
because $\chi\mapsto\chi^q$ permutes $\widehat A$.  Thus $f(x)\in\K$.
Uniqueness follows from Fourier inversion.
\end{proof}

\begin{remark}
Theorem~\ref{thm:fourier-descent} is included to fix conventions and to connect
the code $\mathcal C(\pi_q)$ to Fourier spectra.  Its cyclic conjugacy
constraint is classical~\cite[Theorem~3]{Blahut1979}; evaluating one value per
orbit is the one-value-per-orbit principle underlying the Frobenius
FFT~\cite{HoevenLarrieu}.
The transform-domain literature on abelian and $\F q$-linear cyclic codes
must also be distinguished from the metric results below
\cite{RajanSiddiqi1992,DeyRajan2005}.
\end{remark}

All the results of Section~\ref{sec:semilinear} now apply to $\CA$ with
$I=\widehat A$ and $\pi=\pi_q$.  In particular,
\begin{equation}
 \dim_{\K}\CA=N,
 \qquad |\CA|=q^N.                              \label{eq:fourier-code-size}
\end{equation}
Because the trivial character is fixed by Frobenius,
\begin{equation}
 d_{\mathrm H}(\CA)=1.                          \label{eq:fourier-dmin-one}
\end{equation}

\subsection{Orbit lengths for finite abelian groups}

Let $\mu$ denote the number-theoretic M\"obius function.  Write
\begin{equation}
 A\simeq\mathbb Z/n_1\mathbb Z\times\cdots\times
          \mathbb Z/n_s\mathbb Z,               \label{eq:abelian-decomposition}
\end{equation}
omitting trivial factors.  Empty products below equal $1$, and we use
$\gcd(n,0)=n$.

\begin{theorem}
\label{thm:abelian-orbit-distribution}
For $t\ge0$,
\begin{equation}
 |\Fix(\pi_q^t)|
 =\prod_{j=1}^s\gcd(n_j,q^t-1).                 \label{eq:abelian-fixed-points}
\end{equation}
For every $e\mid m$, the number $A_e(A)$ of characters whose Frobenius orbit
has length exactly $e$, and the number $B_e(A)$ of such orbits, are
\begin{align}
 A_e(A)&=\sum_{r\mid e}\mu(e/r)
          \prod_{j=1}^s\gcd(n_j,q^r-1),          \label{eq:abelian-exact-elements}\\
 B_e(A)&=\frac1e A_e(A).                          \label{eq:abelian-exact-orbits}
\end{align}
\end{theorem}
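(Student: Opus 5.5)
The proof has two parts. First I count the fixed points of $\pi_q^t$ by identifying $\widehat A$ concretely. Then I extract the exact orbit-length counts by M\"obius inversion.

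\textbf{Fixed points.} By the decomposition \eqref{eq:abelian-decomposition}, a character $\chi\in\widehat A$ is determined by the images $\chi(e_j)=\zeta_j$ of the standard generators $e_j$ of the cyclic factors. Here $\zeta_j$ ranges over the $n_j$-th roots of unity in $\Lfield^*$. All $n_j$ of them are present because $n_j\mid E\mid Q-1$ by \eqref{eq:splitting-assumption} and $\Lfield^*$ is cyclic. Hence
\[
 \widehat A\simeq\mu_{n_1}\times\cdots\times\mu_{n_s},
\]
where $\mu_n$ denotes the group of $n$-th roots of unity (here only, not the M\"obius function). Under this identification $\pi_q^t$ acts coordinatewise by $\zeta\mapsto\zeta^{q^t}$. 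A character is therefore fixed if and only if $\zeta_j^{q^t-1}=1$ for every $j$. In the cyclic group $\mu_{n_j}$, the number of solutions of $\zeta^{q^t-1}=1$ is $\gcd(n_j,q^t-1)$. For $t=0$ every element is a solution, giving $n_j=\gcd(n_j,0)$, which matches the stated convention. Taking the product over $j$ gives \eqref{eq:abelian-fixed-points}.

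\textbf{Orbit lengths.} Since $\pi_q^m=1$, every orbit length divides $m$. Fix $r\mid m$. A character $\chi$ lies in $\Fix(\pi_q^r)$ if and only if its orbit length $e$ divides $r$, so
\[
 |\Fix(\pi_q^r)|=\sum_{e\mid r}A_e(A).
\]
This identity holds for all divisors $r$ of $m$. M\"obius inversion is valid on the divisor lattice of $m$, since that lattice is closed under taking divisors. Inverting gives
\[
 A_e(A)=\sum_{r\mid e}\mu(e/r)\,|\Fix(\pi_q^r)|,
\]
and substituting the fixed-point formula yields \eqref{eq:abelian-exact-elements}. Finally, each orbit of length exactly $e$ contributes exactly $e$ characters to $A_e(A)$, which gives $B_e(A)=A_e(A)/e$, i.e.\ \eqref{eq:abelian-exact-orbits}.

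\textbf{Main obstacle.} The only point needing care is the identification of $\widehat A$ with a product of root-of-unity groups. This uses the splitting hypothesis to guarantee that $\Lfield^*$ contains $n_j$ distinct $n_j$-th roots of unity. Once that is in place, the Frobenius action is transparent and the rest is routine.
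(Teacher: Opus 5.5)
Your proof is correct and follows essentially the same route as the paper. The paper identifies $\widehat A$ with $\prod_j\mathbb Z/n_j\mathbb Z$ additively, so that $\pi_q^t$ becomes multiplication by $q^t$ and the fixed points are the solutions of $(q^t-1)x=0$. This is your root-of-unity identification written additively, and the paper then uses the same M\"obius inversion and division by $e$.
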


\begin{proof}
After identifying $\widehat A$ with the product in
\eqref{eq:abelian-decomposition}, $\pi_q^t$ is componentwise multiplication
by $q^t$.  The congruence $(q^t-1)x=0$ in $\mathbb Z/n_j\mathbb Z$ has
$\gcd(n_j,q^t-1)$ solutions, proving \eqref{eq:abelian-fixed-points}.
Characters with orbit length dividing $e$ are exactly the fixed points of
$\pi_q^e$.  M\"obius inversion gives
\eqref{eq:abelian-exact-elements}; division by $e$ gives
\eqref{eq:abelian-exact-orbits}.
\end{proof}

For $A=\mathbb Z/n\mathbb Z$, where $n\mid Q-1$, these are the usual
$q$-cyclotomic classes modulo $n$, and
\begin{equation}
 B_e(n)=\frac1e\sum_{r\mid e}\mu(e/r)\gcd(n,q^r-1).
                                                        \label{eq:cyclic-Be}
\end{equation}

In the full cyclic case $n=q^m-1$, for every $e\mid m$,
\begin{equation}
 B_e(q^m-1)
 =\frac1e\sum_{r\mid e}\mu(e/r)(q^r-1).          \label{eq:full-Be}
\end{equation}
These formulas are classical orbit counts, equivalent to the enumeration of
nonzero elements of prescribed degree over $\K$; see
\cite{LidlNiederreiter,HuffmanPless}.

\section{Exact distance and coset geometry}
\label{sec:geometry}

\subsection{Distance enumerators and lists}

Let $g\in\Lfield^I$ be arbitrary.  For a cycle $O$ of length $\ell_O$, define
the normalized received symbols
\begin{equation}
 x_{O,t}=g_{i_{O,t}}^{q^{m-t}},
 \qquad 0\le t<\ell_O,                           \label{eq:received-normalization}
\end{equation}
and, for $b\in S_O=\F{q^{\ell_O}}$, their multiplicities
\begin{equation}
 \nu_O(b)=|\{t:x_{O,t}=b\}|,
 \qquad
 M_O=\max_{b\in S_O}\nu_O(b).                   \label{eq:cycle-multiplicity}
\end{equation}
Values $x_{O,t}\notin S_O$ do not match any admissible seed.

\begin{theorem}
\label{thm:distance-enumerator}
The polynomial enumerating all distances from $g$ to $\Cpi$ factors as
\begin{equation}
 D_g(z):=\sum_{f\in\Cpi}z^{\dist(g,f)}
 =\prod_{O\in\mathfrak O(\pi)}
   \left(\sum_{b\in S_O}z^{\ell_O-\nu_O(b)}\right).       \label{eq:distance-enumerator}
\end{equation}
Consequently,
\begin{equation}
 \dist(g,\Cpi)
 =\sum_{O\in\mathfrak O(\pi)}(\ell_O-M_O),       \label{eq:nearest-distance}
\end{equation}
and the number of nearest codewords is
\begin{equation}
 \prod_{O\in\mathfrak O(\pi)}
 \left|\{b\in S_O:\nu_O(b)=M_O\}\right|.          \label{eq:number-nearest}
\end{equation}
\end{theorem}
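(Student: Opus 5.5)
The plan is to push everything through the diagonal normal form of Theorem~\ref{thm:diagonal-normal-form}. Since $\Psi$ is a coordinatewise Hamming isometry, $\dist(g,f)=\dist(\Psi(g),\Psi(f))$ for all $f\in\Cpi$. By \eqref{eq:product-repetition}, $f\mapsto\Psi(f)$ is a bijection from $\Cpi$ onto $\prod_O\Delta_{\ell_O}(S_O)$, and it is parametrized by independent seeds $(b_O)_{O}$ with $b_O\in S_O$. By \eqref{eq:normalization} and \eqref{eq:received-normalization}, $\Psi(g)_{O,t}=x_{O,t}$. The first step is to record these identifications and to note that the Hamming distance on $\prod_O\Lfield^{\ell_O}$ is the sum of the distances on the blocks.

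The second step is the computation on a single cycle. For a seed $b\in S_O$, the distance between $(x_{O,0},\ldots,x_{O,\ell_O-1})$ and $(b,\ldots,b)$ equals $\ell_O$ minus the number of positions $t$ with $x_{O,t}=b$, which is $\ell_O-\nu_O(b)$. Positions with $x_{O,t}\notin S_O$ never match and always count as errors, and this is consistent because $\nu_O$ is only defined on $S_O$. Writing $\dist(g,f)=\sum_O(\ell_O-\nu_O(b_O))$ then gives $z^{\dist(g,f)}=\prod_O z^{\ell_O-\nu_O(b_O)}$. Summing over the product set of seeds and using distributivity, which works because the seeds range independently over $\prod_O S_O$, yields \eqref{eq:distance-enumerator}.

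The last step is the consequences. The distance is a sum of independent nonnegative terms, one per cycle, so it is minimized by minimizing each term separately. On each cycle the minimum of $\ell_O-\nu_O(b)$ over $b\in S_O$ is $\ell_O-M_O$, which gives \eqref{eq:nearest-distance}. A seed tuple attains the global minimum if and only if every $b_O$ attains $\nu_O(b_O)=M_O$. Otherwise some term exceeds its minimum and no other term can compensate, since every term is already at least its own minimum. The count of nearest codewords is therefore the product \eqref{eq:number-nearest}; equivalently, it is the lowest-degree coefficient of the factored $D_g$. I do not expect a real obstacle anywhere. The only point needing care is the bookkeeping check that $\Psi(g)$ coincides with the $x_{O,t}$ and that nonadmissible symbols are handled correctly. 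Note also that $M_O$ may be $0$ when no symbol lies in $S_O$; the formulas remain valid in that case.
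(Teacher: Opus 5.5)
Your proposal is correct and follows essentially the same route as the paper. The paper applies the coordinatewise inverse Frobenius (i.e.\ the normalization $\Psi$) to get $g_{i_{O,t}}=b^{q^t}\iff x_{O,t}=b$, reads off the local distance $\ell_O-\nu_O(b)$, and uses independence of the seeds to obtain the product; it then takes the nearest distance and nearest-word count as the lowest exponent of $D_g$ and its coefficient, which your explicit per-cycle minimization simply spells out.
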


\begin{proof}
On $O$, the codeword with seed $b$ has coordinates $b^{q^t}$.  By applying
the inverse Frobenius to both symbols,
\[
 g_{i_{O,t}}=b^{q^t}
 \quad\Longleftrightarrow\quad
 x_{O,t}=b.
\]
Hence the local distance is $\ell_O-\nu_O(b)$.  The seed choices on distinct
cycles are independent, so summing $z^{\dist(g,f)}$ over all codewords gives
\eqref{eq:distance-enumerator}.  Its smallest exponent and the coefficient
of that exponent give \eqref{eq:nearest-distance} and
\eqref{eq:number-nearest}.
\end{proof}

\begin{remark}
Formula \eqref{eq:nearest-distance} gives a linear-size modal decoder after
the normalized values and subfield tests are available.  If no normalized
value on $O$ lies in $S_O$, then $M_O=0$ and all $q^{\ell_O}$ seeds are
equally near.  Thus choosing the zero seed is permissible but is not unique.
\end{remark}

\begin{corollary}
\label{cor:exact-list-size}
For an integer $R\ge0$,
\begin{equation}
 |\{f\in\Cpi:\dist(g,f)\le R\}|
 =\sum_{j=0}^{R}[z^j]D_g(z).                    \label{eq:global-list-size}
\end{equation}
On one cycle of length $\ell$, the list size at an integer local radius
$r\ge0$ is
\begin{equation}
 L_O(g,r)=
 \left|\{b\in\F{q^\ell}:\nu_O(b)\ge\ell-r\}\right|.       \label{eq:local-list-size}
\end{equation}
For $0\le r<\ell$, the exact worst-case value is
\begin{equation}
 \max_g L_O(g,r)
 =\left\lfloor\frac{\ell}{\ell-r}\right\rfloor,           \label{eq:worst-list-size}
\end{equation}
whereas for $r\ge\ell$ it is $q^\ell$.
\end{corollary}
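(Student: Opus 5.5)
The global list-size formula \eqref{eq:global-list-size} follows directly from Theorem~\ref{thm:distance-enumerator}. The coefficient $[z^j]D_g(z)$ counts the codewords $f\in\Cpi$ with $\dist(g,f)=j$. Summing over $0\le j\le R$ therefore counts those at distance at most $R$, and nothing more is needed.

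For the local statement on one cycle $O$ of length $\ell$, I use the computation in the proof of Theorem~\ref{thm:distance-enumerator}. The seed $b\in\F{q^\ell}$ lies at local distance $\ell-\nu_O(b)$ from the received block. The condition $\ell-\nu_O(b)\le r$ is equivalent to $\nu_O(b)\ge \ell-r$, which gives \eqref{eq:local-list-size}.

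For the worst-case bound with $0\le r<\ell$, put $k=\ell-r\ge1$.

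\emph{Upper bound.} The sets $\{t:x_{O,t}=b\}$ are pairwise disjoint subsets of $\{0,\dots,\ell-1\}$ as $b$ ranges over $S_O$. So $\sum_b\nu_O(b)\le\ell$. Hence at most $\lfloor \ell/k\rfloor$ seeds can have $\nu_O(b)\ge k$.

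\emph{Attainment.} I must construct $g$ reaching this count, which requires $\lfloor\ell/k\rfloor$ distinct seeds to be available. This holds because $\lfloor\ell/k\rfloor\le\ell\le 2^\ell-1<q^\ell$.
\begin{itemize}
\item Choose distinct $b_1,\dots,b_s\in\F{q^\ell}$ with $s=\lfloor\ell/k\rfloor$.
\item Set the normalized values $x_{O,t}=b_j$ for $t$ in the $j$-th block of $k$ consecutive positions, and assign the leftover positions arbitrarily, say $b_1$.
\item Since the normalization $z\mapsto z^{q^{m-t}}$ is a bijection of $\Lfield$, this is realized by $g_{i_{O,t}}=x_{O,t}^{q^t}$, with $g$ arbitrary off $O$.
\end{itemize}
Each $b_j$ then has $\nu_O(b_j)\ge k$, so $L_O(g,r)\ge s$.

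For $r\ge\ell$, the condition $\nu_O(b)\ge\ell-r$ holds trivially, since $\ell-r\le 0$. Every one of the $q^\ell$ seeds is therefore in the list, for every $g$.

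The only point needing care is the realizability step. It amounts to checking that the inverse of the isometry $\Psi$ from Theorem~\ref{thm:diagonal-normal-form} lets normalized values be prescribed freely, and this is immediate from bijectivity of Frobenius powers.
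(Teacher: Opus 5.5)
Your proposal is correct and follows the paper's proof essentially step for step. The steps match: the global count via the coefficients of $D_g$, the local list as the one-cycle specialization, the disjoint-support upper bound $\lfloor\ell/k\rfloor$ with $k=\ell-r$, and attainment by $k$ copies of $\lfloor\ell/k\rfloor$ distinct admissible values with leftover positions filled by an already used value. Two small differences are harmless refinements: you check realizability explicitly via $g_{i_{O,t}}=x_{O,t}^{q^t}$, and you close the equality with the upper bound instead of a separate ``no additional seed'' remark.
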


\begin{proof}
Equation \eqref{eq:global-list-size} follows from
Theorem~\ref{thm:distance-enumerator}, and
\eqref{eq:local-list-size} is the one-cycle specialization.  If
$k=\ell-r\ge1$, then distinct seeds counted by the list occupy disjoint sets
of at least $k$ coordinates, so their number is at most
$\lfloor\ell/k\rfloor$.  Equality is obtained by placing $k$ copies of each
of $\lfloor\ell/k\rfloor$ distinct admissible values and filling any remaining
positions with already used values; this creates no additional seed of
multiplicity at least $k$.  There are enough admissible values because
$q^\ell\ge\ell$.
\end{proof}

\subsection{Covering radius}

\begin{theorem}
\label{thm:covering-radius}
For a cycle of length $\ell\mid m$, the constituent
\[
 C_{\ell,m}
 =\{(b,b^q,\ldots,b^{q^{\ell-1}}):b\in\F{q^\ell}\}
 \subseteq\Lfield^\ell
\]
has symbol covering radius
\begin{equation}
 \rho_{\ell,m}=
 \begin{cases}
  \ell,&\ell<m,\\
  \ell-1,&\ell=m.
 \end{cases}                                      \label{eq:local-covering-radius}
\end{equation}
If $B_m(\pi)$ denotes the number of $m$-cycles of $\pi$, then
\begin{equation}
 \rho_{\mathrm H}(\Cpi)=N-B_m(\pi).              \label{eq:global-covering-radius}
\end{equation}
For Fourier spectra on a finite abelian group $A$, one has
$B_m(\pi_q)=B_m(A)$ from Theorem~\ref{thm:abelian-orbit-distribution}.
\end{theorem}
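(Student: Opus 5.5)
The plan is to work one cycle at a time through the normalized received symbols of Theorem~\ref{thm:distance-enumerator}, and then add up the cycle contributions. By \eqref{eq:nearest-distance}, the distance from $g$ to $\Cpi$ is $\sum_O(\ell_O-M_O)$. The cycles carry independent coordinates, and the normalization map \eqref{eq:received-normalization} is a bijection of $\Lfield^{\ell_O}$ onto itself. Hence the normalized tuples $(x_{O,t})_t$ can be chosen arbitrarily and independently on each cycle. It follows that
\[
\rho_{\mathrm H}(\Cpi)=\sum_O\rho_{\ell_O,m}, \qquad \rho_{\ell,m}=\ell-\min_{x\in\Lfield^\ell}\max_{b\in\F{q^\ell}}\nu(b).
\]
So both claims reduce to computing, for one cycle, the minimum over all normalized words $x\in\Lfield^\ell$ of the largest multiplicity of an admissible value.

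For $\ell<m$, $\F{q^\ell}$ is a proper subfield of $\Lfield$. Take every $x_t$ equal to some fixed $c\in\Lfield\setminus\F{q^\ell}$. No admissible seed then matches any coordinate, so $M=0$ and the distance is $\ell$. The trivial bound (distance at most the length) gives $\rho_{\ell,m}=\ell$.

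For $\ell=m$, every element of $\Lfield$ is admissible. Since $\ell\ge1$, any $x$ has $M\ge1$, so $\rho\le\ell-1$. Equality is attained by taking the $x_t$ pairwise distinct, which is possible because $Q=q^m\ge m$; then $M=1$. This proves \eqref{eq:local-covering-radius}.

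Summing over cycles gives $\rho_{\mathrm H}(\Cpi)=\sum_O\ell_O-\#\{O:\ell_O=m\}=N-B_m(\pi)$. For the Fourier case, $B_m(\pi_q)$ is by definition the number of Frobenius orbits of length exactly $m$, which Theorem~\ref{thm:abelian-orbit-distribution} supplies as $B_m(A)$.

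The only point requiring care is the additivity of the covering radius over cycles: one has to realize the local worst cases simultaneously. This works because $\Psi$ (or the normalization \eqref{eq:received-normalization}) is a coordinatewise bijection, so a global $g$ can be assembled from arbitrary local normalized words. Everything else is elementary.
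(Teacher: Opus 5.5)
Your proposal is correct and follows the paper's proof essentially step for step: you reduce to one cycle via the Frobenius normalization, use a constant word outside $\F{q^\ell}$ when $\ell<m$, use pairwise distinct symbols (possible since $Q\ge m$) when $\ell=m$, and sum over cycles. Your justification of additivity through \eqref{eq:nearest-distance} and the bijectivity of the normalization simply spells out the paper's remark that covering radii add under Cartesian products.
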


\begin{proof}
Use the diagonal normal form.  If $\ell<m$, the seed field
$S=\F{q^\ell}$ is a proper subfield of $\Lfield$.  Choosing every normalized
coordinate outside $S$ gives distance $\ell$ from every codeword.  If
$\ell=m$, then $S=\Lfield$, so every received block has at least one match
with a constant word.  Conversely, $\ell$ pairwise distinct normalized
symbols exist because $Q=q^m\ge m$ and give distance $\ell-1$.

The code and the ambient Hamming space are direct products over the cycles.
Covering radii add under Cartesian products, and summing
\eqref{eq:local-covering-radius} gives
\[
 \sum_O\ell_O-|\{O:\ell_O=m\}|=N-B_m(\pi).
\]
\end{proof}

\begin{remark}
In the Fourier specialization, if the order of the Frobenius permutation is
strictly less than $m$, then $B_m(A)=0$ and the symbol covering radius equals
the full length $N$.
\end{remark}

\subsection{Complete coset-leader distribution}

For integers $\ell\ge1$ and $-1\le k\le\ell$, put
\begin{equation}
 \mathcal A_{\ell,m}(k)=
 \begin{cases}
 0,&k=-1,\\[1mm]
 \displaystyle
 \ell!\,[u^\ell]\,
 \exp\bigl((Q-q^\ell)u\bigr)
 \left(\sum_{j=0}^{k}\frac{u^j}{j!}\right)^{q^\ell},
 &0\le k\le\ell.
 \end{cases}                                      \label{eq:occupancy-A}
\end{equation}
Here $[u^\ell]$ denotes coefficient extraction in the formal power series.

\begin{theorem}
\label{thm:coset-leader-enumerator}
For a coset $\Gamma\in\Lfield^\ell/C_{\ell,m}$, put
\[
 d(\Gamma,C_{\ell,m})=\min_{y\in\Gamma}\wtH(y).
\]
Define
\[
 \Lambda_{\ell,m}(z)
 =\sum_{\Gamma\in\Lfield^\ell/C_{\ell,m}}
   z^{d(\Gamma,C_{\ell,m})}.
\]
Then
\begin{equation}
 \Lambda_{\ell,m}(z)
 =q^{-\ell}\sum_{k=0}^{\ell}
   \bigl(\mathcal A_{\ell,m}(k)
        -\mathcal A_{\ell,m}(k-1)\bigr)z^{\ell-k}.        \label{eq:local-coset-enumerator}
\end{equation}
The global coset-leader enumerator is
\begin{equation}
 \Lambda_{\Cpi}(z)
 =\prod_{O\in\mathfrak O(\pi)}
   \Lambda_{\ell_O,m}(z).                        \label{eq:global-coset-enumerator}
\end{equation}
In particular, the coefficient of $z^r$ is the exact number of cosets of
$\Cpi$ having coset-leader weight $r$.
\end{theorem}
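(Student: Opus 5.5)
We count, for each $k$, the number of received words $g\in\Lfield^\ell$ with $M\le k$, and then pass from words to cosets. By Theorem~\ref{thm:diagonal-normal-form} (with a single cycle of length $\ell$), the map $\Psi$ is a bijective Hamming isometry taking $C_{\ell,m}$ to $\Delta_\ell(\F{q^\ell})$ and cosets to cosets. We may therefore work with normalized words $x\in\Lfield^\ell$ and the repetition code $\Delta_\ell(S)$, $S=\F{q^\ell}$.

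By Theorem~\ref{thm:distance-enumerator}, the distance from $x$ to the code is $\ell-M(x)$, where $M(x)=\max_{b\in S}\nu(b)$ is the largest multiplicity of a single admissible value among the coordinates of $x$. This distance is constant on each coset, since the coset leader weight is the distance of any member to the code. Each coset contains exactly $|C_{\ell,m}|=q^\ell$ words, so
\[
 \Lambda_{\ell,m}(z)=q^{-\ell}\sum_{x\in\Lfield^\ell}z^{\ell-M(x)}.
\]
It therefore suffices to show that $\mathcal A_{\ell,m}(k)$ counts the words with $M(x)\le k$. The number with $M(x)=k$ is then $\mathcal A_{\ell,m}(k)-\mathcal A_{\ell,m}(k-1)$, and the convention $\mathcal A_{\ell,m}(-1)=0$ handles $k=0$. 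This gives \eqref{eq:local-coset-enumerator}, with $k$ running over $0,\dots,\ell$.

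The counting step is the standard exponential generating function for sequences, or labelled occupancy. A word $x\in\Lfield^\ell$ is an assignment of the $\ell$ labelled positions to $Q$ letters. The $Q-q^\ell$ non-admissible letters are unrestricted, and each contributes the factor $e^u$, giving $\exp((Q-q^\ell)u)$ in total. Each of the $q^\ell$ admissible letters may be used at most $k$ times, and each contributes the truncated exponential $\sum_{j\le k}u^j/j!$. The product formula for labelled structures then says that $\ell!\,[u^\ell]$ of the product of these series counts the words, which is exactly \eqref{eq:occupancy-A}. The one point needing care is that the multiplicity constraint applies only to letters of $S$, because non-admissible letters never match a seed. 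I expect this bookkeeping to be the only real obstacle, and it is mild.

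For the global statement \eqref{eq:global-coset-enumerator}, $\Psi$ identifies $\Lfield^I/\Cpi$ with $\prod_O \Lfield^{\ell_O}/\Delta_{\ell_O}(S_O)$. Minimum weight is additive over the cycle blocks, because Hamming weight is additive and the coset of a block-product word is the product of its block cosets. Summing $z^{\sum_O d_O}$ over tuples of cosets therefore factors into the product $\prod_O\Lambda_{\ell_O,m}(z)$. Alternatively, \eqref{eq:nearest-distance} gives $q^{-N}\sum_g z^{\dist(g,\Cpi)}=\prod_O q^{-\ell_O}\sum_{x_O}z^{\ell_O-M_O}$ directly. The coefficient of $z^r$ is then the number of cosets of leader weight $r$ by definition.
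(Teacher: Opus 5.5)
Your proposal is correct and follows essentially the same route as the paper's proof. Both arguments normalize the block and identify the local distance as $\ell-M$. Both then count words with $M\le k$ via the labelled-occupancy exponential generating function, with the truncated series for the $q^\ell$ admissible letters and $e^{u}$ for the $Q-q^\ell$ others, divide by the coset size $q^\ell$, and factor over the disjoint cycles.

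Your explicit remark that $\Psi$ carries cosets of $C_{\ell,m}$ to cosets of $\Delta_\ell(\F{q^\ell})$ while preserving weight only spells out what the paper leaves implicit in the phrase ``after normalization.''
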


\begin{proof}
After normalization, let $N_b$ be the number of occurrences of
$b\in S=\F{q^\ell}$ in an ambient block, and put
$M=\max_{b\in S}N_b$.  The local distance is $\ell-M$.
The number of labelled words for which $M\le k$ is
$\mathcal A_{\ell,m}(k)$.  Indeed, each of the $q^\ell$ admissible symbols
has exponential generating function
$\sum_{j=0}^{k}u^j/j!$, while each of the $Q-q^\ell$ outside symbols has no
multiplicity restriction and contributes $e^u$.  Thus the number of words
with $M=k$ is
$\mathcal A_{\ell,m}(k)-\mathcal A_{\ell,m}(k-1)$.

Every coset contains $|C_{\ell,m}|=q^\ell$ words, and distance to an additive
code is constant on a coset.  Division by $q^\ell$ proves
\eqref{eq:local-coset-enumerator}.  Both the quotient and the distance split
over disjoint cycles, proving \eqref{eq:global-coset-enumerator}.
\end{proof}

\begin{corollary}
\label{cor:random-distance-distribution}
If $G_O$ is uniform in $\Lfield^\ell$, then
\begin{equation}
 \Pr\{\dist(G_O,C_{\ell,m})=\ell-k\}
 =\frac{\mathcal A_{\ell,m}(k)
        -\mathcal A_{\ell,m}(k-1)}{Q^\ell}.       \label{eq:local-random-distance}
\end{equation}
For uniform $G\in\Lfield^I$, the distances on different cycles are
independent and the probability generating function of
$\dist(G,\Cpi)$ is
\begin{equation}
 \mathbb E\,z^{\dist(G,\Cpi)}
 =\prod_{O\in\mathfrak O(\pi)}
 Q^{-\ell_O}
 \sum_{k=0}^{\ell_O}
 \bigl(\mathcal A_{\ell_O,m}(k)
       -\mathcal A_{\ell_O,m}(k-1)\bigr)z^{\ell_O-k}.
 \label{eq:global-random-pgf}
\end{equation}
\end{corollary}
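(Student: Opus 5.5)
The corollary follows by converting the exact counts of Theorem~\ref{thm:coset-leader-enumerator} into probabilities, using that the normalization map is a bijection on the ambient space.

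\emph{Local distribution.} The proof of Theorem~\ref{thm:coset-leader-enumerator} shows that exactly $\mathcal A_{\ell,m}(k)-\mathcal A_{\ell,m}(k-1)$ words of the normalized block space $\Lfield^\ell$ have maximal admissible multiplicity $M=k$, and each such word has local distance $\ell-k$. The coordinatewise map $z\mapsto z^{q^{m-t}}$ permutes $\Lfield$, so the normalization \eqref{eq:received-normalization} is a bijection of $\Lfield^\ell$. Hence it sends a uniform block $G_O$ to a uniform normalized block. Theorem~\ref{thm:distance-enumerator} gives $\dist(G_O,C_{\ell,m})=\ell-M$. Dividing the count of words with $M=k$ by $Q^\ell=|\Lfield^\ell|$ yields \eqref{eq:local-random-distance}. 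The case $k=0$ is covered by the convention $\mathcal A_{\ell,m}(-1)=0$.

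\emph{Independence.} A uniform $G\in\Lfield^I$ has independent uniform coordinates. Its restrictions $G_O$ to the disjoint cycles are therefore independent and each is uniform on $\Lfield^{\ell_O}$. By \eqref{eq:nearest-distance}, $\dist(G,\Cpi)=\sum_O(\ell_O-M_O)$, and each summand is a function of $G_O$ alone. So the summands are independent random variables.

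\emph{Generating function.} By independence, the probability generating function of the sum is the product of the local ones. Each local factor is $\sum_k \Pr\{M_O=k\}z^{\ell_O-k}$, which by \eqref{eq:local-random-distance} equals the corresponding factor in \eqref{eq:global-random-pgf}. This gives \eqref{eq:global-random-pgf}.

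As a check, evaluating the local factor at $z=1$ telescopes to $Q^{-\ell}\mathcal A_{\ell,m}(\ell)$. Since $\sum_{j\le \ell}u^j/j!$ agrees with $e^u$ up to degree $\ell$, we get $\mathcal A_{\ell,m}(\ell)=\ell!\,[u^\ell]e^{Qu}=Q^\ell$, so each factor is $1$ at $z=1$, as a probability generating function must be.
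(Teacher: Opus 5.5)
Your proposal is correct and follows essentially the same route as the paper: divide the occupancy counts from the proof of Theorem~\ref{thm:coset-leader-enumerator} by $Q^\ell$, then use independence of the disjoint cycle blocks. Your remarks that the normalization is a bijection preserving uniformity, and that the $z=1$ check gives $\mathcal A_{\ell,m}(\ell)=Q^\ell$, spell out details the paper leaves implicit.
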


\begin{proof}
Divide the word counts in the proof of
Theorem~\ref{thm:coset-leader-enumerator} by $Q^\ell$.  Independence follows
from the disjoint coordinate blocks.
\end{proof}

\begin{corollary}
\label{cor:mean-unique-nearest}
For a uniform block $G_O\in\Lfield^\ell$,
\begin{equation}
 \mathbb E\,\dist(G_O,C_{\ell,m})
 =Q^{-\ell}\sum_{k=0}^{\ell-1}\mathcal A_{\ell,m}(k).  \label{eq:local-mean-distance}
\end{equation}
Consequently, for uniform $G\in\Lfield^I$,
\begin{equation}
 \mathbb E\,\dist(G,\Cpi)
 =\sum_{O\in\mathfrak O(\pi)}Q^{-\ell_O}
   \sum_{k=0}^{\ell_O-1}\mathcal A_{\ell_O,m}(k).        \label{eq:global-mean-distance}
\end{equation}
Put $s=q^\ell$ and
\begin{equation}
 \mathcal U_{\ell,m}
 =s\sum_{k=1}^{\ell}\binom{\ell}{k}(\ell-k)!
   [u^{\ell-k}]\,
   e^{(Q-s)u}
   \left(\sum_{j=0}^{k-1}\frac{u^j}{j!}\right)^{s-1}.
                                                        \label{eq:local-unique-count}
\end{equation}
Then the probability that $G_O$ has a unique nearest word in
$C_{\ell,m}$ is $\mathcal U_{\ell,m}/Q^\ell$.  The probability that a
uniform $G\in\Lfield^I$ has a unique nearest word in $\Cpi$ is
\begin{equation}
 \prod_{O\in\mathfrak O(\pi)}
   \frac{\mathcal U_{\ell_O,m}}{Q^{\ell_O}}.             \label{eq:global-unique-probability}
\end{equation}
\end{corollary}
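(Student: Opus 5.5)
The plan is to reduce everything to a single normalized block through the diagonal normal form of Theorem~\ref{thm:diagonal-normal-form}, and then use the occupancy counts from the proof of Theorem~\ref{thm:coset-leader-enumerator}.

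For the mean distance, write $D=\dist(G_O,C_{\ell,m})=\ell-M$, where $M=\max_{b\in S}N_b$ is the maximal admissible multiplicity in the normalized block. Since $0\le D\le \ell$, the tail-sum formula gives $\mathbb E D=\sum_{r=1}^{\ell}\Pr\{D\ge r\}=\sum_{r=1}^{\ell}\Pr\{M\le \ell-r\}=\sum_{k=0}^{\ell-1}\Pr\{M\le k\}$. By the counting in Theorem~\ref{thm:coset-leader-enumerator}, $\Pr\{M\le k\}=\mathcal A_{\ell,m}(k)/Q^\ell$. The normalization is a bijection of $\Lfield^\ell$, so it preserves the uniform distribution. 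This gives \eqref{eq:local-mean-distance}. Linearity of expectation together with \eqref{eq:nearest-distance} then gives \eqref{eq:global-mean-distance}.

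For uniqueness, formula \eqref{eq:number-nearest} shows that the nearest word of $G_O$ is unique exactly when a single seed $b\in S$ attains $M$. The case $M=0$ has to be handled separately. There all $s=q^\ell$ seeds tie, and $s\ge2$, so such words are never counted. This is consistent with the sum in \eqref{eq:local-unique-count} starting at $k=1$.

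For $1\le k\le\ell$, I count labelled words in which one chosen seed $b$ occurs exactly $k$ times and every other admissible symbol occurs at most $k-1$ times. There are $s$ choices of $b$ and $\binom{\ell}{k}$ choices for its positions. The remaining $\ell-k$ positions are filled with the other $s-1$ admissible symbols, each of multiplicity at most $k-1$, and with the $Q-s$ outside symbols, which are unrestricted. The exponential generating function count for this filling is $(\ell-k)!\,[u^{\ell-k}]\,e^{(Q-s)u}\bigl(\sum_{j<k}u^j/j!\bigr)^{s-1}$. These events are disjoint across pairs $(b,k)$, because a unique maximizer determines both $b$ and $k$. Summing over $b$ and $k$ therefore yields $\mathcal U_{\ell,m}$, and dividing by $Q^\ell$ gives the local probability.

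For the global statement, the nearest-word count \eqref{eq:number-nearest} is a product over cycles. Hence it equals $1$ if and only if every factor equals $1$. The blocks of a uniform $G$ are independent, as in Corollary~\ref{cor:random-distance-distribution}, so the probability factors as in \eqref{eq:global-unique-probability}.

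The only delicate step is the disjointness and exhaustiveness of the uniqueness count together with the exclusion of $M=0$. The rest is routine bookkeeping.
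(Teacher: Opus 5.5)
Your proposal is correct and follows essentially the same route as the paper. The shared steps are a tail-sum identity with $\Pr\{M\le k\}=\mathcal A_{\ell,m}(k)/Q^\ell$, the same EGF count over the winning seed, its multiplicity $k\ge1$ and its positions, and factorization over the cycles by independence; you apply the tail sum to $D=\ell-M$ rather than to $M$, which is equivalent, and you make explicit that $M=0$ is excluded because all $s\ge2$ seeds tie, which the paper leaves implicit by starting the sum at $k=1$.
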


\begin{proof}
With $M=\max_{b\in\F{q^\ell}}N_b$, the tail-sum identity gives
\[
 \mathbb E M
 =\sum_{j=1}^{\ell}\Pr\{M\ge j\}
 =\ell-Q^{-\ell}\sum_{k=0}^{\ell-1}\mathcal A_{\ell,m}(k).
\]
Since the local distance is $\ell-M$, this proves
\eqref{eq:local-mean-distance}; additivity over the cycles proves
\eqref{eq:global-mean-distance}.

For uniqueness, choose the unique winning seed in $s$ ways and suppose that
it occurs exactly $k\ge1$ times.  Choose its positions in
$\binom{\ell}{k}$ ways.  Among the remaining labelled positions, every one
of the other $s-1$ admissible symbols may occur at most $k-1$ times, whereas
the $Q-s$ outside symbols are unrestricted.  The corresponding exponential
generating function is the coefficient in
\eqref{eq:local-unique-count}.  Summing over $k$ counts every block with a
unique modal seed exactly once.  Finally, nearest words and received blocks
factor over the cycles, which proves \eqref{eq:global-unique-probability}.
\end{proof}

\begin{corollary}
\label{cor:deep-holes}
The number $H_{\ell,m}$ of ambient deep holes of the local constituent is
\begin{equation}
 H_{\ell,m}=
 \begin{cases}
 (Q-q^\ell)^\ell,&\ell<m,\\
 (Q)_m:=Q(Q-1)\cdots(Q-m+1),&\ell=m.
 \end{cases}                                      \label{eq:local-deep-holes}
\end{equation}
The corresponding number of deepest local cosets is
$H_{\ell,m}/q^\ell$.  Globally, the numbers of deep holes and deepest cosets
are respectively
\begin{equation}
 \prod_{O}H_{\ell_O,m},
 \qquad
 \prod_O\frac{H_{\ell_O,m}}{q^{\ell_O}}.          \label{eq:global-deep-holes}
\end{equation}
\end{corollary}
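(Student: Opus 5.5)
The plan is to work entirely in the diagonal normal form of Theorem~\ref{thm:diagonal-normal-form}. Since $\Psi$ is a coordinatewise bijective Hamming isometry, it maps deep holes of $\Cpi$ onto deep holes of $\prod_O\Delta_{\ell_O}(S_O)$. Here a deep hole is an ambient word whose distance to the code equals the covering radius. So it suffices to count normalized blocks $x\in\Lfield^\ell$ with $\ell-M=\rho_{\ell,m}$, where $M=\max_{b\in S}N_b$ and $S=\F{q^\ell}$ as in the proof of Theorem~\ref{thm:coset-leader-enumerator}.

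\emph{Local count.} First take $\ell<m$. By Theorem~\ref{thm:covering-radius}, $\rho_{\ell,m}=\ell$, so we need $M=0$. This means every normalized coordinate avoids the proper subfield $S$. Each of the $\ell$ coordinates then has $Q-q^\ell$ choices, giving $(Q-q^\ell)^\ell$ blocks. Equivalently, this is $\mathcal A_{\ell,m}(0)=\ell![u^\ell]e^{(Q-q^\ell)u}$.

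Next take $\ell=m$. Then $S=\Lfield$ and $\rho=m-1$, so we need $M=1$. Since every symbol is admissible, $M\ge1$ always holds, and $M=1$ means the $m$ normalized symbols are pairwise distinct. There are $(Q)_m$ such blocks. This is nonzero because $Q\ge m$, which is consistent with the covering radius being attained.

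\emph{Cosets.} The deep-hole property depends only on the distance to the code. That distance is constant on cosets of the additive code $C_{\ell,m}$, and each coset has exactly $q^\ell$ elements. Hence the deep holes form a union of $H_{\ell,m}/q^\ell$ full cosets. Equivalently, $H_{\ell,m}/q^\ell$ is the top coefficient of $\Lambda_{\ell,m}$ in Theorem~\ref{thm:coset-leader-enumerator}.

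\emph{Global statement.} By Theorem~\ref{thm:distance-enumerator}, $\dist(g,\Cpi)=\sum_O(\ell_O-M_O)$, and each term satisfies $\ell_O-M_O\le\rho_{\ell_O,m}$. The global covering radius $N-B_m(\pi)$ of Theorem~\ref{thm:covering-radius} is the sum of the local covering radii. So $g$ is a global deep hole if and only if every block is a local deep hole. Since the blocks range independently over $\prod_O\Lfield^{\ell_O}$, the count is the product $\prod_O H_{\ell_O,m}$. The quotient $\Lfield^I/\Cpi$ is the product of the local quotients, so dividing by $|\Cpi|=\prod_O q^{\ell_O}$ gives the coset count.

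The only delicate point is this global step: one must observe that the equality case of a sum of bounded terms forces equality in every term. The rest is direct counting.
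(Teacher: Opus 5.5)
Your proposal is correct and follows the paper's proof. The local counts come from $M=0$ (all normalized symbols outside $\F{q^\ell}$) when $\ell<m$, and from pairwise distinct normalized symbols when $\ell=m$; the coset and global counts then follow from additivity and the direct-product decomposition, where you make explicit the step the paper leaves implicit: the global covering radius is attained only if every block is a local deep hole.
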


\begin{proof}
For $\ell<m$, distance $\ell$ means that none of the normalized symbols lies
in $\F{q^\ell}$.  For $\ell=m$, distance $m-1$ means that the largest
multiplicity is one, i.e. all normalized symbols are distinct.  This gives
\eqref{eq:local-deep-holes}.  Products follow from the direct decomposition.
\end{proof}

\section{The full cyclic family and covering asymptotics}
\label{sec:cyclic}

Let
\[
 n=Q-1=q^m-1
\]
and let $\pi(s)=qs$ on $\mathbb Z/n\mathbb Z$.  We write
$F_{q,m}=\mathcal C(\pi)$.  This is the Frobenius-consistent spectral space
of the length-$n$ Fourier transform.  For brevity, write
$B_e=B_e(q^m-1)$, and let $\tau(r)$ denote the number of positive divisors
of $r$.  By
\eqref{eq:full-Be} and Theorem~\ref{thm:covering-radius},
\begin{equation}
 \rho_{\mathrm H}(F_{q,m})
 =n-\frac1m\sum_{r\mid m}\mu(m/r)(q^r-1).        \label{eq:full-covering-exact}
\end{equation}

For $0\le\delta\le1-1/Q$, define the $Q$-ary entropy function
\begin{equation}
 H_Q(\delta)
 =\delta\log_Q(Q-1)
  -\delta\log_Q\delta
  -(1-\delta)\log_Q(1-\delta),                  \label{eq:qary-entropy}
\end{equation}
with the usual convention $0\log 0=0$.  It is strictly increasing on the
displayed interval.  Let
\[
 \delta^*_{q,m}=H_Q^{-1}(1-1/m).
\]
In the asymptotic expansions below, $\log$ without a displayed base denotes
the natural logarithm.

\begin{theorem}
\label{thm:covering-efficiency}
Fix $q$ and let $m\to\infty$.  Then
\begin{equation}
 \frac{\rho_{\mathrm H}(F_{q,m})}{q^m-1}
 =1-\frac1m
  +O\left(\frac{\tau(m)}{m}q^{-m/2}\right).      \label{eq:normalized-radius-asymptotic}
\end{equation}
Every nonempty code $C\subseteq\Lfield^n$ of cardinality at most
$q^n=Q^{n/m}$ satisfies the rate-only sphere-covering bound
\begin{equation}
 \frac{\rho_{\mathrm H}(C)}{n}\ge\delta^*_{q,m}. \label{eq:sphere-covering-inverse}
\end{equation}
Moreover,
\begin{equation}
 \delta^*_{q,m}
 =1-\frac1m-\frac{\log m+1}{m^2\log q}
  +O\left(\frac{(\log m)^2}{m^3}\right).         \label{eq:entropy-inverse-asymptotic}
\end{equation}
Consequently,
\begin{equation}
 \frac{\rho_{\mathrm H}(F_{q,m})}{q^m-1}-\delta^*_{q,m}
 =\frac{\log m+1}{m^2\log q}
  +O\left(\frac{(\log m)^2}{m^3}\right).         \label{eq:covering-gap-second-order}
\end{equation}
\end{theorem}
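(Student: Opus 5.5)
The proof has four parts, one for each displayed formula; only the third needs real work.

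\emph{Radius asymptotics.} For \eqref{eq:normalized-radius-asymptotic}, start from \eqref{eq:full-covering-exact} and write $\rho_{\mathrm H}(F_{q,m})=n-B_m$. Split
\[
 B_m=\frac{q^m-1}{m}+\frac1m\sum_{r\mid m,\ r<m}\mu(m/r)(q^r-1).
\]
The first term contributes exactly $1/m$ after division by $n=q^m-1$. Every proper divisor satisfies $r\le m/2$, and there are fewer than $\tau(m)$ of them. Hence the remaining sum is bounded in absolute value by $\tau(m)q^{m/2}$. Dividing by $mn\asymp mq^m$ gives the error term $O(\tau(m)q^{-m/2}/m)$.

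\emph{Sphere-covering bound.} For \eqref{eq:sphere-covering-inverse}, use the standard argument: the balls of radius $\rho=\rho_{\mathrm H}(C)$ around codewords cover $\Lfield^n$, so $|C|\,V_Q(n,\rho)\ge Q^n$.
\begin{itemize}
\item If $\rho/n\le 1-1/Q$, the entropy bound $V_Q(n,\rho)\le Q^{nH_Q(\rho/n)}$ together with $|C|\le Q^{n/m}$ gives $H_Q(\rho/n)\ge 1-1/m$. Strict monotonicity of $H_Q$ then yields $\rho/n\ge\delta^*_{q,m}$.
\item If $\rho/n>1-1/Q$, the inequality is immediate, because $H_Q(1-1/Q)=1$ forces $\delta^*_{q,m}\le 1-1/Q$.
\end{itemize}

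\emph{Inverting the entropy.} This is the main step, \eqref{eq:entropy-inverse-asymptotic}. Put $x=1-\delta^*_{q,m}$, so $x$ is small. Write $h(x)=-x\log x-(1-x)\log(1-x)$ for the natural binary entropy, and use $\log Q=m\log q$ and $\log_Q(Q-1)=1+O(q^{-m}/m)$. The defining equation $H_Q(1-x)=1-1/m$ becomes
\[
 x=\frac1m+\frac{x\log(1/x)+x+O(x^2)}{m\log q}+O(q^{-m}/m),
\]
after expanding $-(1-x)\log(1-x)=x+O(x^2)$.

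I would bootstrap this fixed-point relation. First, $x\to0$ because $\delta^*\to1$. A crude pass then gives $x=1/m+O(\log m/m^2)$. Substituting this back, $x\log(1/x)=(\log m)/m+O((\log m)^2/m^2)$, while the terms $x$ and $x^2$ contribute $1/m+O(\log m/m^2)$ and $O(1/m^2)$. This yields
\[
 x=\frac1m+\frac{\log m+1}{m^2\log q}+O\!\left(\frac{(\log m)^2}{m^3}\right).
\]
I expect the obstacle to be keeping the error terms honest through the bootstrap. In particular, the $(\log m)^2/m^3$ error comes from the variation of $\log(1/x)$ at the first-order perturbation of $x$. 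I would handle this with the mean value theorem applied to $x\log(1/x)$ on an interval around $1/m$ of width $O(\log m/m^2)$.

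\emph{Combining.} Finally, \eqref{eq:covering-gap-second-order} follows by subtracting \eqref{eq:entropy-inverse-asymptotic} from \eqref{eq:normalized-radius-asymptotic}. The term $O(\tau(m)q^{-m/2}/m)$ is exponentially small and is absorbed into $O((\log m)^2/m^3)$.
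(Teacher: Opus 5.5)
Your proposal is correct and follows essentially the same route as the paper. It uses the same proper-divisor bound on $B_m$, the standard sphere-covering argument with the $Q$-ary entropy bound, and the same bootstrap of the fixed-point relation $x=\frac1m+\frac{h(x)}{m\log q}+O\bigl(1/(Q\log Q)\bigr)$ with a mean-value estimate near $1/m$. The only point to make explicit is your a priori claim $x\to0$: obtain it directly from $0\le h\le\log 2$ in that relation, which gives $x=\Theta(1/m)$ before the refined pass, as the paper does.
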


\begin{proof}
Formula \eqref{eq:full-Be} with $e=m$ gives
\[
 B_m(q^m-1)
 =\frac{q^m-1}{m}
  +O\left(\frac{\tau(m)}m q^{m/2}\right),
\]
because every proper divisor of $m$ is at most $m/2$.  Combining this with
\eqref{eq:global-covering-radius} proves
\eqref{eq:normalized-radius-asymptotic}.

Let $R=\rho_{\mathrm H}(C)$.  The Hamming balls of radius $R$ around the
codewords cover $\Lfield^n$, so
\[
 |C|\sum_{j=0}^{R}\binom nj(Q-1)^j\ge Q^n.
\]
If $R/n\le1-1/Q$, the standard entropy estimate for a $Q$-ary Hamming ball
gives
\[
 \sum_{j=0}^{R}\binom nj(Q-1)^j
 \le Q^{nH_Q(R/n)}.
\]
Since $|C|\le Q^{n/m}$, it follows that
$H_Q(R/n)\ge1-1/m$, proving
\eqref{eq:sphere-covering-inverse}.  If $R/n>1-1/Q$, the same conclusion is
automatic.  This is the usual sphere-covering argument; see~\cite{CohenEtAl}.

For the sharper inverse-entropy expansion, put $\varepsilon=1-\delta$ and
write
\[
 h(\varepsilon)
 =-\varepsilon\log\varepsilon-(1-\varepsilon)\log(1-\varepsilon).
\]
Uniformly for $0\le\varepsilon\le1$,
\begin{equation}
 H_Q(1-\varepsilon)
 =1-\varepsilon+\frac{h(\varepsilon)}{m\log q}
  +O\left(\frac1{Q\log Q}\right).               \label{eq:entropy-local-expansion}
\end{equation}
Let $\varepsilon^*=1-\delta^*_{q,m}$.  Substitution in
\eqref{eq:entropy-local-expansion} gives the fixed-point relation
\begin{equation}
 \varepsilon^*=\frac1m+\frac{h(\varepsilon^*)}{m\log q}
  +O\left(\frac1{Q\log Q}\right).                \label{eq:entropy-fixed-point}
\end{equation}
Using first $0\le h\le\log2$ and then
$h(\varepsilon)=O(\varepsilon\log(1/\varepsilon))$ gives successively
\[
 \varepsilon^*=\Theta(1/m),\qquad
 \varepsilon^*=\frac1m+O\left(\frac{\log m}{m^2}\right).
\]
Since $h'(x)=\log((1-x)/x)$,
\[
 h(\varepsilon^*)
 =h(1/m)+O\left(\frac{(\log m)^2}{m^2}\right)
 =\frac{\log m+1}{m}+O\left(\frac{(\log m)^2}{m^2}\right).
\]
Substitution once more in \eqref{eq:entropy-local-expansion} proves
\eqref{eq:entropy-inverse-asymptotic}.  Finally, the exponentially small
error in \eqref{eq:normalized-radius-asymptotic} is absorbed by the remainder
in \eqref{eq:entropy-inverse-asymptotic}; their difference is
\eqref{eq:covering-gap-second-order}.
\end{proof}

\begin{remark}
The asymptotic family in Theorem~\ref{thm:covering-efficiency} has changing
alphabet $Q=q^m$ and length $n=Q-1$.  This explicit regime replaces the
ambiguous $o(1)$ that would result from sending $n\to\infty$ while keeping
$q$ and $m$ fixed; under the Fourier splitting condition, such a fixed-field
limit is unavailable because $n\le q^m-1$.
\end{remark}

\begin{theorem}
\label{thm:poisson-deficit}
Let $m\ge2$, let $G_m$ be uniform in $\Lfield^{q^m-1}$, and put
\[
 X_m=\rho_{\mathrm H}(F_{q,m})-\dist(G_m,F_{q,m}),
 \qquad
 \lambda_m=\frac{m-1}{2}+\mathbf 1_{\{2\mid m\}}.
\]
For fixed $q$ and $m\to\infty$,
\begin{equation}
 d_{\mathrm{TV}}\bigl(\mathcal L(X_m),
        \operatorname{Poisson}(\lambda_m)\bigr)\longrightarrow0,            \label{eq:poisson-tv}
\end{equation}
where $d_{\mathrm{TV}}$ is total variation distance.  Moreover,
\begin{align}
 \mathbb E X_m&=\lambda_m+o(1),                                   \label{eq:deficit-mean}\\
 \frac{X_m-\lambda_m}{\sqrt{\lambda_m}}
   &\xrightarrow{\mathrm d}\mathcal N(0,1),                       \label{eq:deficit-clt}\\
 \log\Pr\{\dist(G_m,F_{q,m})=\rho_{\mathrm H}(F_{q,m})\}
   &=-\lambda_m+o(1).                                               \label{eq:deep-hole-asymptotic}
\end{align}
In particular, the probability that a uniform word is a deep hole equals
$\exp(-\lambda_m+o(1))$.
\end{theorem}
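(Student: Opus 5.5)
The plan is to decompose $X_m$ over Frobenius cycles and reduce everything to independent, nearly-Bernoulli summands. By Theorem~\ref{thm:diagonal-normal-form} and Theorem~\ref{thm:distance-enumerator}, after normalization the blocks $G_O$ are independent and uniform in $\Lfield^{\ell_O}$. Let $M_O$ be the largest multiplicity of an admissible symbol of $\F{q^{\ell_O}}$ in the normalized block. Theorem~\ref{thm:covering-radius} and \eqref{eq:nearest-distance} give
\[
 X_m=\sum_O Y_O,\qquad Y_O=\begin{cases}M_O,&\ell_O<m,\\ M_O-1,&\ell_O=m,\end{cases}
\]
and these $Y_O$ are independent and nonnegative. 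Put $p_O=\Pr\{Y_O\ge1\}$, and write $p_O^{(2)}$ for a bound on $\Pr\{Y_O\ge2\}$ together with $\mathbb E[Y_O;Y_O\ge2]\le\ell_O\Pr\{Y_O\ge2\}$.

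First, estimate these quantities by cycle length, using \eqref{eq:full-Be}: $B_m=Q/m+O(\tau(m)q^{m/2}/m)$ and $B_\ell\le q^\ell/\ell$.

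For $\ell=m$, $Y_O\ge1$ means the $m$ normalized symbols are not all distinct. Hence $p_O=1-(Q)_m/Q^m=\binom m2/Q+O(m^4/Q^2)$, and $\Pr\{Y_O\ge2\}=O(m^3/Q^2)$ (a triple or two disjoint collisions).

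For $\ell<m$, $p_O\le \ell q^\ell/Q$ and $\Pr\{Y_O\ge2\}\le\binom\ell2 q^{2\ell}/Q^2$. For $\ell=m/2$ we get $p_O=\frac m2 q^{-m/2}(1+O(mq^{-m/2}))$.

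Summing these estimates:
\begin{itemize}
\item The $m$-cycles contribute $\sum p_O=\frac{m-1}2+o(1)$.
\item The $(m/2)$-cycles, present only when $m$ is even, contribute $B_{m/2}\cdot\frac m2 q^{-m/2}=1+o(1)$, since $B_{m/2}=\frac{2}{m}q^{m/2}(1+o(1))$.
\item All $\ell\le m/3$ contribute at most $\sum_{\ell\mid m,\,\ell\le m/3}q^{2\ell}/Q=O(\tau(m)q^{-m/3})\to0$.
\end{itemize}
Thus $\lambda_m':=\sum_O p_O=\lambda_m+o(1)$. The same bookkeeping gives $\sum_O p_O^2\to0$; for example, the $m$-cycles give $O((Q/m)\,m^4/Q^2)=O(m^3/Q)$. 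It also gives $\sum_O \ell_O\Pr\{Y_O\ge2\}\to0$, with $m$-cycles $O(m^3/Q)$ and $(m/2)$-cycles $O(m^3q^{-m/2})$.

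Second, Poisson approximation. Couple $X_m$ with $X_m'=\sum_O\mathbf 1\{Y_O\ge1\}$. Then $\Pr\{X_m\ne X_m'\}\le\sum_O\Pr\{Y_O\ge2\}\to0$. Le Cam's inequality gives $d_{\mathrm{TV}}(\mathcal L(X_m'),\operatorname{Poisson}(\lambda_m'))\le\sum_O p_O^2\to0$. Finally $d_{\mathrm{TV}}(\operatorname{Poisson}(\lambda_m'),\operatorname{Poisson}(\lambda_m))\le|\lambda_m'-\lambda_m|\to0$. Together these prove \eqref{eq:poisson-tv}.

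The mean \eqref{eq:deficit-mean} comes directly: $\mathbb E X_m=\lambda_m'+\sum_O\mathbb E[(Y_O-1);Y_O\ge2]=\lambda_m+o(1)$.

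For \eqref{eq:deficit-clt}, use the fact that $(\operatorname{Poisson}(\lambda)-\lambda)/\sqrt\lambda\to\mathcal N(0,1)$ as $\lambda\to\infty$, here $\lambda_m\sim m/2$. Total-variation convergence transfers this limit to $X_m$.

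For \eqref{eq:deep-hole-asymptotic}, the TV bound alone is too weak, since $e^{-\lambda_m}\to0$. Instead use exact independence:
\[
 \Pr\{X_m=0\}=\prod_O(1-p_O),\qquad \log\Pr\{X_m=0\}=-\sum_Op_O+O\Bigl(\sum_Op_O^2\Bigr)=-\lambda_m+o(1).
\]
Here all $p_O\le1/2$ for large $m$. This identity agrees with Corollary~\ref{cor:deep-holes}.

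The main obstacle is the first step: obtaining uniform first-order asymptotics for $p_O$ with errors small enough after multiplying by the cycle counts. The delicate points are the error term $O(\tau(m)q^{m/2}/m)$ in $B_m$, multiplied by $m^2/Q$, which is harmless, and showing that the intermediate divisors $m/3\ge\ell>$ contribute nothing while $\ell=m/2$ contributes exactly $1$. I would handle the $\ell<m$ cases by union bounds over positions, $\Pr\{x_{O,t}\in\F{q^\ell}\}=q^{\ell-m}$, and inclusion–exclusion for the lower bound at $\ell=m/2$.
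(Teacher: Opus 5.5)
Your proposal is correct and follows essentially the same route as the paper. You use the cycle decomposition $X_m=\sum_O Y_O$ and couple to the indicators $\mathbf 1\{Y_O\ge1\}$ at cost $\sum_O\Pr\{Y_O\ge2\}$. You then apply Le Cam's inequality, transfer the Poisson CLT through total variation, and compute the deep-hole atom from the exact product $\prod_O(1-p_O)$. The only difference is cosmetic: you apply Le Cam once to all cycles, whereas the paper treats the $m$-cycles and $(m/2)$-cycles separately and discards the cycles of length at most $m/3$ by a union bound. One small slip: on a full cycle, $Y_O\ge2$ means $M_O\ge3$, so only triple coincidences matter (two disjoint pairs give $Y_O=1$). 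That is exactly why your bound $O(m^3/Q^2)$ is the right one.
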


\begin{proof}
The cycle blocks are independent.  On a cycle of length $e<m$, let $M_e$
be the largest occupancy of an admissible symbol in $\F{q^e}$; its
contribution to $X_m$ is $M_e$.  On a full cycle the contribution is
$M_m-1$.

First consider the $B_m$ full cycles.  For one such cycle put
$J=\mathbf 1_{\{M_m\ge2\}}$.  The variables $J$ are independent Bernoulli
variables with
\begin{equation}
 p_m=1-\frac{(Q)_m}{Q^m}
     =\frac{\binom m2}{Q}+O\left(\frac{m^4}{Q^2}\right).             \label{eq:full-collision-probability}
\end{equation}
The variables $M_m-1$ and $J$ differ only if some symbol occurs at least
three times.  A union bound over triples gives
\begin{equation}
 \Pr\left\{\sum_{i=1}^{B_m}(M_{m,i}-1)
       \ne\sum_{i=1}^{B_m}J_i\right\}
 \le B_m\binom m3 Q^{-2}=O(m^2/Q).                \label{eq:triple-collision-bound}
\end{equation}
By \eqref{eq:full-Be},
\begin{equation}
 B_mp_m=\frac{m-1}{2}+o(1),
 \qquad B_mp_m^2=O(m^3/Q)=o(1).                  \label{eq:full-poisson-parameters}
\end{equation}
Le Cam's inequality~\cite{LeCam1960} therefore approximates
$\sum_iJ_i$ in total variation by a Poisson variable of mean
$(m-1)/2+o(1)$.

If $m$ is even, put $e=m/2$ and $s=q^e$.  On an $e$-cycle the indicator
$H=\mathbf 1_{\{M_e\ge1\}}$ is Bernoulli with parameter
\begin{equation}
 r_m=1-(1-s/Q)^e=1-(1-s^{-1})^e.                  \label{eq:half-hit-probability}
\end{equation}
Formula \eqref{eq:full-Be}, applied with $e$, and the binomial expansion in
\eqref{eq:half-hit-probability} give
\[
 B_e=\frac{s}{e}+O\left(\frac{\tau(e)}e q^{e/2}\right),
 \qquad
 r_m=\frac{e}{s}+O\left(\frac{e^2}{s^2}\right).
\]
Hence
\begin{equation}
 B_er_m=1+O\left(\frac es+\tau(e)q^{-e/2}\right)\longrightarrow1,
 \qquad B_er_m^2=O(e/s)\longrightarrow0.          \label{eq:half-poisson-parameters}
\end{equation}
Furthermore,
\[
 B_e\Pr\{M_e\ge2\}
 \le B_e\binom e2\frac{s}{Q^2}=o(1),
\]
because two positions occupied by the same admissible symbol have
probability $s/Q^2$.  Hence the total contribution of the half cycles is,
in total variation, asymptotically Poisson with mean $1$.  This contribution
is independent of that of the full cycles.

Every remaining proper divisor satisfies $e\le m/3$.  Since
$eB_e\le q^e-1$, a union bound gives
\begin{equation}
 \Pr\{\text{some remaining cycle contributes to }X_m\}
 \le\sum_{\substack{e\mid m\\e\le m/3}}
       B_e e q^{e-m}
 \le\tau(m)q^{-m/3}=o(1),                        \label{eq:short-cycle-negligible}
\end{equation}
where we used $\tau(m)\le m$.  The sum of
independent Poisson variables is Poisson.  Moreover, total variation does not
increase under convolution, and
\[
 d_{\mathrm{TV}}\bigl(\operatorname{Poisson}(\mu),
   \operatorname{Poisson}(\nu)\bigr)\le|\mu-\nu|.
\]
Consequently, combining
\eqref{eq:triple-collision-bound}--\eqref{eq:short-cycle-negligible} proves
\eqref{eq:poisson-tv}.

The same bounds also control the first moment.  On a full cycle and, when
$m$ is even, on a half cycle, respectively,
\[
 0\le(M_m-1)-J\le m\mathbf 1_{\{M_m\ge3\}},
 \qquad
 0\le M_e-H\le e\mathbf 1_{\{M_e\ge2\}}.
\]
Thus their total expectation errors are, respectively,
$O(m^3/Q)$ and $O(e^2/s^2)$, both $o(1)$.  On every remaining cycle,
$M_e$ is at most the number of admissible hits, so its total expected
contribution is bounded by the right-hand side of
\eqref{eq:short-cycle-negligible}.  Together with
\eqref{eq:full-poisson-parameters} and
\eqref{eq:half-poisson-parameters}, this proves \eqref{eq:deficit-mean}.
Since $\lambda_m\to\infty$, the classical central limit theorem for a
Poisson variable, combined with \eqref{eq:poisson-tv}, gives
\eqref{eq:deficit-clt}.

It remains to estimate the atom at zero directly, since total variation
alone does not give its relative asymptotics.  Corollary~\ref{cor:deep-holes}
gives
\begin{align*}
 \log\Pr\{X_m=0\}
 &=B_m\sum_{j=0}^{m-1}\log(1-j/Q)\\
 &\quad+\sum_{\substack{e\mid m\\e<m}}
       B_e e\log(1-q^{e-m}).
\end{align*}
Using $\log(1-x)=-x+O(x^2)$, the full-cycle term is
\[
 -B_m\frac{\binom m2}{Q}
 +O\left(\frac{B_m m^3}{Q^2}\right)
 =-\frac{m-1}{2}+o(1).
\]
When $m$ is even, the half-cycle term satisfies
\[
 B_e e\log(1-1/s)
 =-\frac{eB_e}{s}+O\left(\frac{eB_e}{s^2}\right)
 =-1+o(1).
\]
For $e\le m/3$, the inequality $|\log(1-x)|\le2x$ for sufficiently small
$x$ and the estimate in \eqref{eq:short-cycle-negligible} show that the
remaining terms sum to $o(1)$.  This proves
\eqref{eq:deep-hole-asymptotic}.
\end{proof}

\begin{example}
\label{ex:binary-seven}
Take $q=2$, $m=3$, $Q=8$, and $n=7$.  The cyclotomic classes are
\[
 \{0\},\qquad \{1,2,4\},\qquad \{3,6,5\}.
\]
Hence
\begin{align*}
 W_{F_{2,3}}(z)&=(1+z)(1+7z^3)^2,\\
 d_{\mathrm H}(F_{2,3})&=1,\qquad
 \rho_{\mathrm H}(F_{2,3})=7-2=5.
\end{align*}
For the one-cycle constituent,
$\Lambda_{1,3}(z)=1+3z$.  A full three-cycle is an $8$-ary repetition code:
its $64$ cosets consist of one coset at distance $0$, $21$ at distance $1$,
and $42$ at distance $2$.  Therefore
\begin{equation}
 \Lambda_{F_{2,3}}(z)
 =(1+3z)(1+21z+42z^2)^2.                        \label{eq:example-coset-enumerator}
\end{equation}
\end{example}

\section{Conclusion}

The standard Frobenius conjugacy constraint on finite-field Fourier spectra
defines a subfield-linear code whose symbol-Hamming geometry is controlled by
the Frobenius cycle lengths.  The diagonal isometry to a product of subfield
repetition codes gives exact received-word and list enumerators, covering
radii, coset-leader distributions, mean distances, unique-nearest-word
probabilities, and deep-hole counts.  The Fourier descent and orbit formulas
used to identify the specialization are classical; the contribution is the
resulting exact metric analysis.

For the full cyclic family, the same occupancy model shows that the random
deficit from the covering radius is asymptotically Poisson and hence satisfies
a central limit theorem.  It also yields the deep-hole probability and the
second-order gap between the exact radius and the rate-only sphere-covering
bound.  Since the trivial character forces minimum distance one, these spaces
are naturally nearest-word approximation spaces rather than strong
worst-case error-correcting codes.

\end{document}